\theoremstyle{plain}
\newtheorem{theorem}{Theorem}[section]
\newtheorem{prop}[theorem]{Proposition}
\newtheorem{cor}{Corollary}[theorem]
\theoremstyle{definition}
\newtheorem{remark}{Remark}[section]
\newtheorem{example}{Example}[theorem]
\begin{document}

\title[Orthogonality of bilinear forms]{Orthogonality of bilinear forms and application to matrices}
                \newcommand{\acr}{\newline\indent}

\author{Saikat Roy, Tanusri Senapati, Debmalya Sain}

\address[Roy]{Department of Mathematics\\ National Institute of Technology Durgapur\\ Durgapur 713209\\ West Bengal\\ INDIA}
\email{saikatroy.cu@gmail.com}

\address[Senapati] {Department of Mathematics\\Gushkara Mahavidyalaya\\ West Bengal\\ India}
\email{senapati.tanusri@gmail.com}

\address{(Sain)~Department of Mathematics, Indian Institute of Science, Bengaluru 560012, Karnataka, India}
\email{saindebmalya@gmail.com}          

\subjclass[2010]{Primary 46A32, Secondary 15A63}
\keywords{Birkhoff-James orthogonality; Bilinear forms; Bhatia-$ \breve{S} $emrl Theorem.}

\thanks{Dr. Debmalya Sain feels elated to acknowledge the heartening friendship Mr. Sujoy Koner. The research of Mr. Saikat Roy is supported by CSIR MHRD in the form of Junior Research Fellowship under the supervision of Prof. Satya Bagchi.}

\begin{abstract}
We characterize Birkhoff-James orthogonality of continuous vector-valued functions on a compact topological space. As an application of our investigation, Birkhoff-James orthogonality of real bilinear forms are studied. This allows us to present an elementary proof of the well-known Bhatia-\v{S}emrl Theorem in the real case.
\end{abstract}

\maketitle

\section{Introduction}

Throughout the text, the letter $ \mathbb{X} $ denotes a normed linear space and the symbols  $ \mathbb{H}_1,~ \mathbb{H}_2 $ are reserved for finite-dimensional Hilbert spaces. In this article, we work only with real normed linear spaces. Let $ S_{\mathbb{X}} $ denote the unit sphere of $ \mathbb{X}, $ i.e., $ S_{\mathbb{X}} = \{ x \in \mathbb{X} : \|x\|=1 \}. $ Let $ \mathcal{U} $ be a compact topological space and let $ \mathcal{C}(\mathcal{U}, \mathbb{X}) $ denote the real normed linear space of all vector-valued continuous functions from $ \mathcal{U} $ to $ \mathbb{X} $, equipped with the norm $ \| \cdot \|_\mathcal{U} $, where
\begin{align*}
\|f\|_\mathcal{U} = \underset{u\in \mathcal{U}}{\sup}\|f(u)\|~~\forall~~f\in \mathcal{C}(\mathcal{U}, \mathbb{X}).
\end{align*}
For any $ f\in \mathcal{C}(\mathcal{U}, \mathbb{X}) $, we denote the norm attainment set of $ f $ by $ \mathcal{M}_f $, i.e.,
\begin{align*}
 \mathcal{M}_f = \{ u\in \mathcal{U} :  \|f(u)\| = \|f\|_\mathcal{U} \}.
\end{align*} 
We note that $ \mathcal{M}_f \neq \emptyset $ for any $ f \in \mathcal{C}(\mathcal{U}, \mathbb{X}) $. Let $ \mathbb{L}(\mathbb{X}) $ denote the normed linear space of all bounded linear operators defined on $ \mathbb{X} $, endowed with the usual operator norm. For any $T\in  \mathbb{L}(\mathbb{X}) $, the norm attainment set of $T$ is defined as $ M_T = \{ x \in S_\mathbb{X} : \|Tx \| = \|T\| \}. $ Observe that for any $ T \in \mathbb{L}(\mathbb{X}) $, where $ \mathbb{X} $ is a finite-dimensional Banach space, the restriction of $ T $ on $ S_{\mathbb{X}} $ (which is again denoted by $ T $ for the sake of simplicity) is a member of $ \mathcal{C}(S_\mathbb{X}, \mathbb{X} ) $ and $ \|T\| = \|T\|_{S_\mathbb{X}} $. \\

Let $ \mathcal{B}(\mathbb{H}_1\times \mathbb{H}_2, \mathbb{R}) $ denote the Banach space of all real bilinear forms on $ \mathbb{H}_1 \times \mathbb{H}_2, $ endowed with the supremum norm. Let $ \langle \cdot , \cdot \rangle $ denote the usual inner product in $ \mathbb{H}_1$, i.e., $ \langle x_1, x_2 \rangle = x_1^Tx_2 $ for all $ x_1,x_2\in \mathbb{H}_1$, where $x_1^T$ stands for the transpose of $x_1$. Assuming that dim $\mathbb{H}_1=m$ and dim $\mathbb{H}_2=n$, it is well-known that each $m\times n$ matrix $ A$  can be identified with a unique linear operator in $\mathbb{L}(\mathbb{H}_2,\mathbb{H}_1)$. Moreover,  each $m\times n$ matrix $ A $ induces a unique bilinear form $ \mathcal{T}_A $, defined by, 
\begin{align}\label{presentation}
\mathcal{T}_A(x,y) = \langle x, Ay \rangle~~\forall~(x,y)\in \mathbb{H}_1\times \mathbb{H}_2.
\end{align}
On the other hand, every bilinear form in $ \mathcal{B}(\mathbb{H}_1\times \mathbb{H}_2, \mathbb{R}) $ corresponds to a unique $m\times n$ matrix. Throughout the article, we use the symbol $ \mathcal{T}_A $ to denote a member of $ \mathcal{B}(\mathbb{H}_1\times \mathbb{H}_2, \mathbb{R}) $ of the form (\ref{presentation}). For any $ \mathcal{T}_A\in \mathcal{B}(\mathbb{H}_1\times \mathbb{H}_2, \mathbb{R}) $, the norm of $ \mathcal{T}_A $ is defined by 
\begin{align*}
\|\mathcal{T}_A\| = \underset{(x,y)\in S_{\mathbb{H}_1}\times S_{\mathbb{H}_2}}{\sup}|\mathcal{T}_A(x,y)|.
\end{align*}
The norm attainment set of $\mathcal{T}_A$ is defined as $M_{\mathcal{T}_A} = \{ (x,y) \in S_{\mathbb{H}_1} \times S_{\mathbb{H}_2} : \| \mathcal{T}_A(x,y) \| = \| \mathcal{T}_A \| \}. $ Let us also observe that $ \mathcal{T}_A\in \mathcal{B}(\mathbb{H}_1\times \mathbb{H}_2, \mathbb{R}) $ implies that $ \mathcal{T}_A \in \mathcal{C}(S_{\mathbb{H}_1}\times S_{ \mathbb{H}_2}, \mathbb{R}) $ and $ \|\mathcal{T}_A\|_{S_{\mathbb{H}_1} \times S_{\mathbb{H}_2}} = \| \mathcal{T}_A \| $.\\  

Given any two elements $ x,y \in \mathbb{X}$, $ x $ is said to be  Birkhoff-James orthogonal to $ y, $ written as $ x \perp_B y, $   if $\|x+\lambda y\| \geq \|x\| $ for all $ \lambda \in \mathbb{R};$ see \cite{B, J, Ja}. Similarly, for any two operators $ T,~A\in \mathbb{L}(\mathbb{X}) $, $ T\perp_B A $, if $\|T+\lambda A\| \geq \|T\| $ for all $ \lambda \in \mathbb{R}.$ In \cite{BS}, Bhatia and $ \breve{S}$emrl completely characterized Birkhoff-James orthogonality of matrices identified as linear operators defined on a finite-dimensional Hilbert space. Later on, Sain \cite{S} extended the said result to characterize Birkhoff-James orthogonality of linear operators on a finite-dimensional Banach space $\mathbb{X}$. To obtain the desired result, the author introduced the notions of $ x^+ $ and $ x^- $ for any non-zero element $ x\in \mathbb{X} $, where  $x^+ =\{y\in \mathbb{X}: \| x+\lambda y \| \geq \| x \| ~\forall \lambda\geq 0\}$ and  $x^- =\{y\in \mathbb{X}: \| x+\lambda y \| \geq \| x \| ~\forall \lambda\leq 0\}$. We refer the readers to \cite{S} for more information in this regard.\\

The aim of the present article is to assimilate the above mentioned works in the more general setting of $ \mathcal{C}(\mathcal{U}, \mathbb{X}) $. As a consequence of this, we completely characterize Birkhoff-James orthogonality of real bilinear forms. As an important application, we provide an elementary proof of the classical  Bhatia-\v{S}emrl Theorem in the real case, using Birkhoff-James orthogonality of real bilinear forms. Moreover, we observe that some well-known characterizations of Birkhoff-James orthogonality \cite{S, SP}  of linear operators defined between finite-dimensional real Banach spaces follow immediately from our obtained results. 

\section{main result}

We begin the section with a complete characterization of Birkhoff-James orthogonality in $ \mathcal{C}(\mathcal{U}, \mathbb{X}) $.

\begin{theorem}\label{realcontinuity}
Let $ \mathcal{U} $ be a compact topological space and let $ \mathbb{X} $ be a normed linear space. Let $ f,g\in \mathcal{C}(\mathcal{U}, \mathbb{X}) $ be non-zero. Then $ f\perp_B g $ if and only if  there exist $ u_1,u_2\in \mathcal{M}_f $ such that $ g(u_1) \in f(u_1)^+$ and $ g(u_2) \in f(u_2)^- $.
\end{theorem}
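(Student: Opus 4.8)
The plan is to prove the two implications separately: the sufficiency direction is immediate from the definitions, while the converse requires combining the compactness of $\mathcal{U}$ with the convexity of the scalar maps $t \mapsto \|f(v) + t g(v)\|$. For sufficiency, suppose $u_1, u_2 \in \mathcal{M}_f$ satisfy $g(u_1) \in f(u_1)^+$ and $g(u_2) \in f(u_2)^-$. Fixing $\lambda \geq 0$, the membership $g(u_1) \in f(u_1)^+$ gives $\|f(u_1) + \lambda g(u_1)\| \geq \|f(u_1)\| = \|f\|_\mathcal{U}$, the last equality because $u_1 \in \mathcal{M}_f$, and hence $\|f + \lambda g\|_\mathcal{U} \geq \|f(u_1) + \lambda g(u_1)\| \geq \|f\|_\mathcal{U}$. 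For $\lambda \leq 0$ the identical argument applied to $u_2$ and $f(u_2)^-$ yields $\|f + \lambda g\|_\mathcal{U} \geq \|f\|_\mathcal{U}$. Together these cover all $\lambda \in \mathbb{R}$, so $f \perp_B g$.

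For the converse, assume $f \perp_B g$. Two elementary facts drive the argument: (i) for every scalar $\lambda$ the supremum defining $\|f + \lambda g\|_\mathcal{U}$ is attained, since $f + \lambda g$ is continuous on the compact space $\mathcal{U}$; and (ii) for each fixed $v \in \mathcal{U}$ the map $t \mapsto \|f(v) + t g(v)\|$ is convex, being a norm composed with an affine map. I would take a net of positive scalars $\lambda_\alpha \to 0^{+}$ and, for each $\alpha$, choose by (i) a maximizer $v_\alpha \in \mathcal{U}$ with $\|f(v_\alpha) + \lambda_\alpha g(v_\alpha)\| = \|f + \lambda_\alpha g\|_\mathcal{U} \geq \|f\|_\mathcal{U}$, the last step being exactly the hypothesis $f \perp_B g$. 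Passing to a convergent subnet, I may assume $v_\alpha \to u_1 \in \mathcal{U}$ (if $\mathcal{U}$ is metrizable an ordinary sequence $\lambda_n \to 0^{+}$ and a convergent subsequence $v_{\lambda_n} \to u_1$ suffice). Since $\lambda_\alpha g(v_\alpha) \to 0$ and $f(v_\alpha) \to f(u_1)$ by continuity, taking limits in the inequality $\|f(v_\alpha) + \lambda_\alpha g(v_\alpha)\| \geq \|f\|_\mathcal{U}$ gives $\|f(u_1)\| \geq \|f\|_\mathcal{U}$, so $u_1 \in \mathcal{M}_f$.

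To upgrade this to $g(u_1) \in f(u_1)^+$, I would fix $\lambda > 0$ and exploit (ii). Writing $h_\alpha(t) = \|f(v_\alpha) + t g(v_\alpha)\|$, convexity of $h_\alpha$ together with $h_\alpha(\lambda_\alpha) \geq \|f\|_\mathcal{U} \geq \|f(v_\alpha)\| = h_\alpha(0)$ forces the chord slopes to be nonnegative from $\lambda_\alpha$ onward, so that $h_\alpha(\lambda) \geq h_\alpha(\lambda_\alpha) \geq \|f\|_\mathcal{U}$ for every $\alpha$ with $\lambda_\alpha \leq \lambda$, which holds eventually along the net. Letting $\alpha$ run along the subnet and using continuity yields $\|f(u_1) + \lambda g(u_1)\| \geq \|f\|_\mathcal{U} = \|f(u_1)\|$. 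As $\lambda > 0$ was arbitrary, $g(u_1) \in f(u_1)^+$. Running the whole argument with a net of negative scalars $\lambda_\alpha \to 0^{-}$ produces $u_2 \in \mathcal{M}_f$ with $g(u_2) \in f(u_2)^-$, completing the proof.

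The main obstacle is the converse: one must produce a single point of $\mathcal{M}_f$ at which the one-sided relation holds. The delicate point is that the maximizers $v_\alpha$ need not lie in $\mathcal{M}_f$; only their limit $u_1$ does, and the convexity-driven monotonicity in step (ii) is precisely what transfers the inequality from the perturbed parameter $\lambda_\alpha$ to all $\lambda > 0$ while surviving the passage to the limit. A secondary technical care is that, for a general non-metrizable compact space, the limit must be extracted through subnets rather than subsequences.
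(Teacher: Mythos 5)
Your proof is correct, and it takes a genuinely different route from the paper's. You argue \emph{directly}: perturb $f$ by $\lambda_\alpha g$ with $\lambda_\alpha \to 0^{\pm}$, choose maximizers $v_\alpha$ of $f+\lambda_\alpha g$ (which satisfy $\|f(v_\alpha)+\lambda_\alpha g(v_\alpha)\| \geq \|f\|_\mathcal{U}$ precisely by orthogonality), extract a convergent subnet $v_\alpha \to u_1$, and show the limit lies in $\mathcal{M}_f$ and witnesses the one-sided relation; the delicate steps hold up, since your chord-slope claim is exactly the convexity statement that $h$ convex with $h(\lambda_\alpha) \geq h(0)$ forces $h(\lambda) \geq h(\lambda_\alpha)$ for all $\lambda \geq \lambda_\alpha$, and subnets are indeed the right substitute for subsequences when $\mathcal{U}$ is not metrizable. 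The paper instead proceeds by \emph{contradiction}: assuming (say) $g(u) \notin f(u)^{+}$ for every $u \in \mathcal{M}_f$, it uses continuity of $\psi(u,\mu)=\|f(u)+\mu g(u)\|$ together with convexity to attach to each norming point $u_0$ a neighborhood $\mathcal{G}_0$ and a parameter interval $(0,\mu_0)$ on which $\psi < \|f\|_\mathcal{U}$, and to each non-norming point a neighborhood with a symmetric interval $(-\sigma_0,\sigma_0)$; a finite subcover then produces a single $\lambda_0>0$ with $\|f+\lambda_0 g\|_\mathcal{U} < \|f\|_\mathcal{U}$, contradicting $f \perp_B g$. Both proofs rest on the same two pillars, compactness of $\mathcal{U}$ and convexity of $t \mapsto \|f(u)+tg(u)\|$, but exploit compactness through different facets: finite subcovers in the paper, subnet convergence in yours. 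Your version essentially generalizes the sequential maximizer argument of Sain \cite{S} from the finite-dimensional (metrizable) operator setting to arbitrary compact spaces, and it is constructive in that $u_1,u_2$ emerge as limit points of perturbed maximizers, with no case analysis from negating the conclusion; the paper's covering argument buys freedom from the theory of nets entirely, needing only the open-cover definition of compactness, at the price of an indirect contradiction scheme and a uniformization step to find the common parameter $\lambda_0$.
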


\begin{proof}
We only prove the necessary part of the theorem as the sufficient part follows directly. Suppose on the contrary that $ g(u) \in f(u)^- $ for all $ u \in \mathcal{M}_f $. Let $ u_0 \in \mathcal{M}_f $ be arbitrary. Then  there exists $ \lambda > 0 $ such that $ \| f(u_0) + \lambda  g(u_0) \| < \| f \|_\mathcal{U} $. By the convexity of the norm, we obtain that
\begin{align}\label{separation}
\| f(u_0) + t\lambda  g(u_0) \| < \| f \|_\mathcal{U} ~\forall~ t \in (0,1). 
\end{align}
Now, we consider the continuous function $ \psi : \mathcal{U} \times [-1,1] \rightarrow \mathbb{R} $ defined by 
\begin{align*}
\psi(u, \mu) = \| f(u) + \mu  g(u) \| ~\forall~(u,\mu)\in \mathcal{U} \times [-1,1].
\end{align*}
Then from (\ref{separation}) it follows that there exists $ \mu_0\in (0,1) $ such that $ \psi(u_0, \mu_0) < \|f \|_\mathcal{U} $. Since $ \psi $ is continuous, we can find  $ \delta_0>0 $ and an open set $ \mathcal{G}_0 $ containing $ u_0 $ in $ \mathcal{U} $ such that
\begin{align*}
\psi(u, \mu) < \|f \|_\mathcal{U} ~ \forall~(u,\mu)\in \mathcal{G}_0\times (\mu_0-\delta_0, \mu_0+\delta_0).
\end{align*}
Once again, applying (\ref{separation}) in the above expression, we obtain
\begin{align*}
\psi(u, \mu) < \|f \|_\mathcal{U} ~ \forall~(u,\mu)\in \mathcal{G}_0\times (0,\mu_0).
\end{align*}
Next, let $ v_0\in \mathcal{U}\setminus \mathcal{M}_f $ be arbitrary. Then we have $ \psi(v_0,0) < \|f\|_\mathcal{U}. $ Again, using the continuity of $ \psi $, we can find $ \sigma_0 > 0 $ and an open set $ \mathcal{\hat G}_0 $ containing $ v_0 $ in $ \mathcal{U} $ such that
\begin{align*}
\psi(v, \sigma) < \|f \|_\mathcal{U} ~ \forall~(v,\sigma)\in \mathcal{\hat G}_0\times (-\sigma_0,\sigma_0).
\end{align*}
Clearly, the collection $ \mathcal{G} = \left\lbrace \mathcal{G}_0 : u_0\in \mathcal{M}_f \right\rbrace \cup \left\lbrace \mathcal{\hat G}_0 : v_0\in \mathcal{U}\setminus \mathcal{M}_f \right\rbrace  $ forms an open cover for $ \mathcal{U} $. Since $ \mathcal{U} $ is compact, there exist natural numbers $ k_1, k_2 $ such that
\begin{align*}
\mathcal{U} \subseteq \left(\bigcup\limits_{i=1}^{k_1}\left\lbrace \mathcal{G}_i : u_i\in M_f \right\rbrace\right)\cup \left(\bigcup\limits_{j=1}^{k_2}\left\lbrace \mathcal{\hat G}_j : v_j\in \mathcal{U}\setminus \mathcal{M}_f \right\rbrace\right).
\end{align*}
Now, fix some $ \lambda_0 \in \left(\bigcap\limits_{i=1}^{k_1} (0, \mu_i)\right)\cap \left(\bigcap\limits_{j=1}^{k_2}(-\sigma_j, \sigma_j)\right).$ Clearly, $ f + \lambda_0  g \in \mathcal{C}(\mathcal{U}, \mathbb{X}) $. Let $ w_0\in \mathcal{M}_{f + \lambda_0  g } $. However, it follows from the choice of $ \lambda_0 $ that 
\begin{align*}
\| f+ \lambda_0  g \|_\mathcal{U} = \| f(w_0) + \lambda  g(w_0) \| < \| f \|_\mathcal{U},
\end{align*}
a contradiction. Similarly, if we consider $ g(u) \in f(u)^+$ for all $ u \in \mathcal{M}_f $, then we once again arrive at a contradiction. This completes the proof of the theorem.
\end{proof}

The above characterization takes a special form whenever we assume that $\mathcal{M}_f $ is a connected subset of $ \mathcal{U} $ for any $ f\in \mathcal{C}(\mathcal{U}, \mathbb{X})$.

\begin{theorem}\label{connectedcontinuity}
Let $ \mathcal{U} $ be a compact topological space and let $ \mathbb{X} $ be a normed linear space. Let $ f\in \mathcal{C}(\mathcal{U}, \mathbb{X}) $ be such that $ f $ is non-zero and $ \mathcal{M}_f $ is connected. Then for any $ g \in \mathcal{C}(\mathcal{U}, \mathbb{X}) $, $ f\perp_B g $ if and only if there exists $ u_0\in \mathcal{M}_f $ such that $ f(u_0) \perp_B g(u_0) $.
\end{theorem}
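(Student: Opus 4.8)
The plan is to deduce this from Theorem \ref{realcontinuity} together with a connectedness argument. The sufficiency is immediate: if $f(u_0) \perp_B g(u_0)$ for some $u_0 \in \mathcal{M}_f$, then for every $\lambda \in \mathbb{R}$,
\[
\|f + \lambda g\|_\mathcal{U} \geq \|f(u_0) + \lambda g(u_0)\| \geq \|f(u_0)\| = \|f\|_\mathcal{U},
\]
so $f \perp_B g$. All the work lies in the necessity.

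For the necessity, suppose $f \perp_B g$. Theorem \ref{realcontinuity} supplies points $u_1, u_2 \in \mathcal{M}_f$ with $g(u_1) \in f(u_1)^+$ and $g(u_2) \in f(u_2)^-$, and I would like to merge these into a single point. To this end I define
\[
A = \{u \in \mathcal{M}_f : g(u) \notin f(u)^+\}, \qquad B = \{u \in \mathcal{M}_f : g(u) \notin f(u)^-\},
\]
and aim to show that $\mathcal{M}_f \setminus (A \cup B)$ is non-empty; indeed, any point $u_0$ in this set satisfies $g(u_0) \in f(u_0)^+$ and $g(u_0) \in f(u_0)^-$ simultaneously, which is precisely $f(u_0) \perp_B g(u_0)$.

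The three facts I need are: (i) $A$ and $B$ are relatively open in $\mathcal{M}_f$; (ii) $A \cap B = \emptyset$; and (iii) $u_1 \notin A$ and $u_2 \notin B$. For (i), if $u \in A$ there is $\lambda > 0$ with $\|f(u) + \lambda g(u)\| < \|f\|_\mathcal{U}$; since $u' \mapsto \|f(u') + \lambda g(u')\|$ is continuous and $\|f(u')\| = \|f\|_\mathcal{U}$ is constant on $\mathcal{M}_f$, the strict inequality persists on a relative neighbourhood of $u$ inside $\mathcal{M}_f$, so $A$ (and symmetrically $B$) is relatively open. Item (iii) is immediate from the defining memberships of $u_1$ and $u_2$. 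The crux is (ii): if some $u$ were to lie in $A \cap B$, there would be $\lambda_1 > 0$ and $\lambda_2 < 0$ with $\|f(u) + \lambda_1 g(u)\| < \|f(u)\|$ and $\|f(u) + \lambda_2 g(u)\| < \|f(u)\|$; writing $0$ as a convex combination $t\lambda_1 + (1-t)\lambda_2$ with $t \in (0,1)$ and invoking convexity of $\lambda \mapsto \|f(u) + \lambda g(u)\|$ yields $\|f(u)\| < \|f(u)\|$, a contradiction.

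With these in hand, connectedness finishes the argument. If $\mathcal{M}_f = A \cup B$, then by (i) and (ii) the space $\mathcal{M}_f$ is the disjoint union of two relatively open sets, so connectedness forces $A = \emptyset$ or $B = \emptyset$; either possibility contradicts (iii), since $A = \emptyset$ would give $B = \mathcal{M}_f \ni u_2$ while $B = \emptyset$ would give $A = \mathcal{M}_f \ni u_1$. Hence $\mathcal{M}_f \setminus (A \cup B) \neq \emptyset$, completing the proof. I expect step (ii), the convexity-based disjointness, to be the main point requiring care, as it is the only place where the interplay between the cones $f(u)^+$ and $f(u)^-$ is genuinely exploited.
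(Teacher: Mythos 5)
Your proof is correct and takes essentially the same route as the paper: the paper splits $\mathcal{M}_f$ into the sets $C_1, C_2$ of points where $\|f(u)+\lambda g(u)\| > \|f\|_\mathcal{U}$ for all $\lambda>0$ (resp.\ all $\lambda<0$) and shows they would form a separation of the connected set $\mathcal{M}_f$, and your sets $A, B$ are exactly these (under the relevant hypothesis, $A = C_2$ and $B = C_1$ by the same convexity trick), so the two arguments differ only in bookkeeping. If anything, your direct verification that $A$ and $B$ are relatively open, and your use of the points $u_1, u_2$ from Theorem \ref{realcontinuity} to rule out $A=\emptyset$ or $B=\emptyset$, spell out more carefully what the paper compresses into the terse claims that $C_1, C_2 \neq \emptyset$ and $\bar{C_1}\cap C_2 = C_1\cap\bar{C_2} = \emptyset$.
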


\begin{proof}
We only prove the necessary part of the theorem as the sufficient part of the theorem follows trivially. Suppose on the contrary that $ f(u_0) \not\perp_B g(u_0) $ for any $ u_0\in \mathcal{M}_f $. Therefore, for each $ u\in \mathcal{M}_f $, either $ g(u) \in f(u)^+ $ or $ g(u)\in f(u)^- $. Now, we consider the following collections:
\begin{align*}
C_1 = \{ u\in \mathcal{M}_f : \| f(u) + \lambda g(u) \| > \|f\|_\mathcal{U},~ \lambda > 0 \},\\
C_2 = \{ u\in \mathcal{M}_f : \| f(u) + \lambda g(u) \| > \|f\|_\mathcal{U}, ~\lambda < 0 \}.
\end{align*}
We claim that $ C_1 \cup C_2 = \mathcal{M}_f $. Let $ v \in \mathcal{M}_f $ be arbitrary. Since $ f(v) \not\perp_B g(v) $, there exists $ \mu \in \mathbb{R} $ such that $ \| f (v)+ \mu g(v) \|< \|f \|_\mathcal{U} $. Without loss of generality, let $ \mu < 0 $, i.e., $ v\not\in C_2 $. Let $ \lambda > 0 $ be arbitrary. Then there exists $ t \in (0,1) $ such that $ f(v) = t (f(v) + \mu g(v)) + (1-t)(f(v)+ \lambda g(v) ). $ This proves that $ \| f \|_\mathcal{U} < \|f(v) + \lambda g(v)\|$, i.e., $ v \in C_1 $. Similarly, assuming $ \mu > 0 $, it can be shown that $ v \in C_2 $. Therefore, $ C_1 \cup C_2 = \mathcal{M}_f $, as desired.\\

Next, we claim that both $ C_1, C_2 $ are non-empty. Indeed, following the same arguments used in the proof of Theorem \ref{realcontinuity}, one can easily verify that $ C_1, C_2 \neq \emptyset $. Finally, we claim that $ C_1, C_2 $ form a separation of $ \mathcal{M}_f $. To establish our claim, we observe that $ \bar{C_1} \cap C_2 = \emptyset $ and $ C_1 \cap \bar{C_2} = \emptyset $, as otherwise we can find $ u_0\in \mathcal{M}_f $ such that $ f(u_0) \perp_B g(u_0) $. However, this is a contradiction to the fact that $ \mathcal{M}_f $ is connected. This completes the proof of the theorem. 
\end{proof}

The above theorem may not be true  if $ \mathcal{M}_f $ is not connected. We furnish the following example in support of this fact.

\begin{example}
Let $ \mathcal{U} = [0,2] \subset \mathbb{R} $ and $ \mathbb{X} = \mathbb{R} $. We define $ f, g \in \mathcal{C}(\mathcal{U}, \mathbb{X}) $ by $ f(u) = sin~ \pi u $ and $ g(u) = 1 $ for all $ u \in [0,2] $. Clearly, $ \mathcal{M}_f = \{ \frac{1}{2}, \frac{3}{2} \} $. Now, $ f(\frac{1}{2})g(\frac{1}{2}) > 0 $, i.e., $ g(\frac{1}{2}) \in f(\frac{1}{2})^+ $ and $ f(\frac{3}{2})g(\frac{3}{2}) < 0 $, i.e., $ g(\frac{3}{2}) \in f(\frac{3}{2})^- $. Then from Theorem \ref{realcontinuity}, it follows that $ f\perp_B g $, however, $ g(v) \neq 0 $ for any $ v \in \mathcal{M}_f $, i.e., $ f(v) \not \perp_B g(v) $ for any $ v \in \mathcal{M}_f $. 
\end{example}

Our next theorem, in some sense, assimilates the above two theorems. This turns out to be an important tool in our further developments. We omit the proof as it is trivial in view of Theorem \ref{realcontinuity} and Theorem \ref{connectedcontinuity}.

\begin{theorem}\label{componentcontinuity}
Let $ \mathcal{U} $ be a compact topological space and let $ \mathbb{X} $ be  a normed linear space. Let $ f,g\in \mathcal{C}(\mathcal{U}, \mathbb{X}) $ be non-zero. Let $ \mathcal{D}_f \subseteq \mathcal{M}_f $ be such that $ \mathcal{D}_f $ is connected. Then the following conditions are equivalent:\\
$ (i) $ There exist $ u_1, u_2 \in \mathcal{D}_f $ such that $ g(u_1) \in f(u_1)^+ $ and $ g(u_2) \in f(u_2)^- $.\\
$ (ii) $ There exists $ u_0\in \mathcal{D}_f $ such that $ f(u_0) \perp_B g(u_0) $.
\end{theorem}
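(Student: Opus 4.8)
The plan is to prove the two implications separately, treating $(ii) \Rightarrow (i)$ as immediate and reserving the real work for $(i) \Rightarrow (ii)$, which I would handle by transplanting the connectedness argument of Theorem \ref{connectedcontinuity} from $\mathcal{M}_f$ to the connected subset $\mathcal{D}_f$.

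For $(ii) \Rightarrow (i)$: if $u_0 \in \mathcal{D}_f$ satisfies $f(u_0) \perp_B g(u_0)$, then it follows immediately from the definitions that $g(u_0) \in f(u_0)^+ \cap f(u_0)^-$; taking $u_1 = u_2 = u_0$ supplies the two points required in $(i)$.

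For $(i) \Rightarrow (ii)$, I would argue by contradiction, assuming $f(u) \not\perp_B g(u)$ for every $u \in \mathcal{D}_f$. Mirroring Theorem \ref{connectedcontinuity}, introduce
\[
C_1 = \{ u \in \mathcal{D}_f : g(u) \notin f(u)^- \}, \qquad C_2 = \{ u \in \mathcal{D}_f : g(u) \notin f(u)^+ \},
\]
and establish four facts. First, $C_1 \cup C_2 = \mathcal{D}_f$, since $f(u) \not\perp_B g(u)$ forces $g(u)$ to lie outside at least one of $f(u)^+$ and $f(u)^-$. Second, $C_1 \cap C_2 = \emptyset$: were both memberships to fail at some $u$, the convex map $\lambda \mapsto \|f(u) + \lambda g(u)\|$ would fall below its value $\|f(u)\|$ at $\lambda = 0$ for some negative and some positive $\lambda$, and expressing $0$ as a convex combination of these two contradicts convexity --- precisely the separation step of Theorem \ref{connectedcontinuity}. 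Third, both sets are non-empty: hypothesis $(i)$ provides $u_1, u_2 \in \mathcal{D}_f$ with $g(u_1) \in f(u_1)^+$ and $g(u_2) \in f(u_2)^-$, and combining these with the standing non-orthogonality assumption yields $u_1 \in C_1$ and $u_2 \in C_2$. Fourth, both sets are open in $\mathcal{D}_f$: for $u_0 \in C_2$ choose $\mu_0 > 0$ with $\|f(u_0) + \mu_0 g(u_0)\| < \|f\|_\mathcal{U}$, invoke continuity of $u \mapsto \|f(u) + \mu_0 g(u)\|$ to obtain a neighbourhood $G$ of $u_0$ on which this strict inequality persists, and use $\|f(u)\| = \|f\|_\mathcal{U}$ for every $u \in \mathcal{D}_f \subseteq \mathcal{M}_f$ to conclude that $G \cap \mathcal{D}_f \subseteq C_2$; the argument for $C_1$ is symmetric.

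Together these facts exhibit $\mathcal{D}_f$ as a disjoint union of two non-empty relatively open sets, contradicting its connectedness and forcing the existence of $u_0 \in \mathcal{D}_f$ with $f(u_0) \perp_B g(u_0)$. I expect the only delicate point to be the bookkeeping that links hypothesis $(i)$ to the non-emptiness of $C_1$ and $C_2$: one must use the contradiction hypothesis $f(u_i) \not\perp_B g(u_i)$ to upgrade $g(u_1) \in f(u_1)^+$ into $g(u_1) \notin f(u_1)^-$, and symmetrically for $u_2$. Everything else is a transcription of the separation and openness arguments already carried out for $\mathcal{M}_f$ in Theorem \ref{connectedcontinuity}, which is presumably why the authors regard the proof as routine; note that no compactness of $\mathcal{D}_f$ is required, only its connectedness together with the continuity supplied by $\mathcal{C}(\mathcal{U}, \mathbb{X})$.
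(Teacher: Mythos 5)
Your proof is correct and takes essentially the route the paper intends: the authors omit the proof of Theorem \ref{componentcontinuity} precisely because it is the separation argument of Theorem \ref{connectedcontinuity} transplanted to the connected set $\mathcal{D}_f$, with the non-emptiness of the two pieces now supplied directly by hypothesis $(i)$ rather than extracted via Theorem \ref{realcontinuity}, which is exactly what you carry out. The only cosmetic deviation is that you verify relative openness of $C_1$ and $C_2$ directly from continuity at a fixed $\mu_0$, whereas the paper's Theorem \ref{connectedcontinuity} phrases the separation through the closure conditions $\bar{C_1}\cap C_2=\emptyset$ and $C_1\cap\bar{C_2}=\emptyset$; the two formulations are interchangeable.
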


As mentioned in the introduction, we obtain some of the earlier results on the Birkhoff-James orthogonality of linear operators as simple corollaries  to Theorem \ref{realcontinuity} and Theorem \ref{connectedcontinuity}.  

\begin{cor}\cite[Theorem 2.2]{S}\label{linearoperator}
Let $ \mathbb{X} $ be a finite-dimensional real Banach space. Let $ T, A\in \mathbb{L}(\mathbb{X}) $. Then $ T \perp_B A $ if and only if there exist $ x,y\in M_T $ such that $ Ax\in Tx^+ $ and $ Ay\in Ty^- $.
\end{cor}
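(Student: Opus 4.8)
The plan is to derive Corollary~\ref{linearoperator} as a direct specialization of Theorem~\ref{realcontinuity}, using the identification of a finite-dimensional operator with a continuous vector-valued function. The key observation, already recorded in the introduction, is that for a finite-dimensional Banach space $\mathbb{X}$, the restriction of any $T \in \mathbb{L}(\mathbb{X})$ to the unit sphere $S_{\mathbb{X}}$ is a member of $\mathcal{C}(S_{\mathbb{X}}, \mathbb{X})$ with $\|T\| = \|T\|_{S_{\mathbb{X}}}$. The compactness of $\mathcal{U} = S_{\mathbb{X}}$ follows from finite-dimensionality of $\mathbb{X}$, so Theorem~\ref{realcontinuity} applies verbatim.

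First I would set $\mathcal{U} = S_{\mathbb{X}}$ and regard $T, A \in \mathbb{L}(\mathbb{X})$ as elements $f = T$ and $g = A$ of $\mathcal{C}(S_{\mathbb{X}}, \mathbb{X})$ via restriction to the sphere. I would then note the two crucial translations of notation: the Birkhoff-James orthogonality $T \perp_B A$ in $\mathbb{L}(\mathbb{X})$, defined by $\|T + \lambda A\| \ge \|T\|$ for all $\lambda \in \mathbb{R}$, coincides exactly with $f \perp_B g$ in $\mathcal{C}(S_{\mathbb{X}}, \mathbb{X})$, since $\|T + \lambda A\| = \|T + \lambda A\|_{S_{\mathbb{X}}} = \|f + \lambda g\|_{S_{\mathbb{X}}}$; and the operator norm attainment set $M_T = \{x \in S_{\mathbb{X}} : \|Tx\| = \|T\|\}$ coincides precisely with the function norm attainment set $\mathcal{M}_f = \{u \in S_{\mathbb{X}} : \|f(u)\| = \|f\|_{S_{\mathbb{X}}}\}$. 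Thus the points $u_1, u_2 \in \mathcal{M}_f$ furnished by Theorem~\ref{realcontinuity} are exactly points $x, y \in M_T$.

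Next I would match the pointwise orthogonality conditions. The condition $g(u_1) \in f(u_1)^+$ becomes $A u_1 \in (T u_1)^+$, that is, $\|T u_1 + \lambda A u_1\| \ge \|T u_1\|$ for all $\lambda \ge 0$; writing $x = u_1$ this reads $Ax \in (Tx)^+$. Similarly $g(u_2) \in f(u_2)^-$ becomes $Ay \in (Ty)^-$ with $y = u_2$. Since the definitions of $x^+$ and $x^-$ recalled in the introduction are applied to the vectors $Tx, Ty \in \mathbb{X}$, this is a literal transcription with no additional work. Conversely, any $x, y \in M_T$ with $Ax \in (Tx)^+$ and $Ay \in (Ty)^-$ translate back into the hypotheses of the sufficiency direction of Theorem~\ref{realcontinuity}, yielding $T \perp_B A$. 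This establishes both implications simultaneously.

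I do not expect any genuine obstacle here, since the corollary is a pure dictionary translation once the identification $T \leftrightarrow f$ is in place; the substantive content was already proved in Theorem~\ref{realcontinuity}. The only point requiring a word of care is confirming that $\mathcal{U} = S_{\mathbb{X}}$ is indeed a compact topological space so that Theorem~\ref{realcontinuity} is applicable, which holds because $\mathbb{X}$ is finite-dimensional and hence its unit sphere is compact. Everything else is a matter of unwinding the notation $f(u)^\pm$ back into the operator setting $(Tx)^\pm$.
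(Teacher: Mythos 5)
Your proposal is correct and follows essentially the same route as the paper: both restrict $T$ and $A$ to the compact unit sphere $S_{\mathbb{X}}$, observe that $\|T+\lambda A\| = \|T+\lambda A\|_{S_{\mathbb{X}}}$ and $M_T = \mathcal{M}_T$, and then apply Theorem~\ref{realcontinuity} directly. Your additional remark on why $S_{\mathbb{X}}$ is compact is a harmless elaboration the paper leaves implicit.
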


\begin{proof}
The proof of the sufficient part is trivial. To prove the necessary part, we first observe that
\begin{align*}
\|T+\lambda A\|_{S_\mathbb{X}} = \|T+\lambda A\| \geq \|T\| = \|T\|_{S_\mathbb{X}},
\end{align*}
for all $ \lambda \in \mathbb{R} $. In other words, $ T ,A \in  \mathcal{C}(S_\mathbb{X}, \mathbb{X}) $ (by considering the respective restriction operators on $ S_{\mathbb{X}} $) with $ T \perp_B A $. Therefore, it follows from Theorem \ref{realcontinuity} that there exist $ x, y \in \mathcal{M}_T $ such that $ Ax\in Tx^+ $ and $ Ay\in Ty^- $. This completes the proof of the necessary part as $ M_T = \mathcal{M}_T $. 
\end{proof}

\begin{cor}\label{linearoperatorconnected}\cite[Theorem 2.1]{SP}
Let $ \mathbb{X} $ be a finite-dimensional real Banach space. Let $ T \in \mathbb{L}(\mathbb{X}) $ be such that $ T $ attains its norm at only $ \pm D $, where $ D $ is a connected subset of $ S_\mathbb{X} $. Then for $ A \in \mathbb{L}(\mathbb{X}) $ with $ T \perp_B A $ there exists $ x \in D $ such that $ Tx\perp_B Ax $.
\end{cor}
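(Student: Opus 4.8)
The plan is to realize $T$ and $A$ as the continuous vector-valued functions obtained by restricting to $S_\mathbb{X}$, exactly as in the proof of Corollary \ref{linearoperator}, so that $\mathcal{M}_T = M_T = D \cup (-D)$ and the operator orthogonality $T \perp_B A$ coincides with the orthogonality of the corresponding functions in $\mathcal{C}(S_\mathbb{X}, \mathbb{X})$. We may clearly assume $A \neq 0$, since otherwise $Tx \perp_B Ax$ holds trivially for every $x \in D$. The target is to produce a single point $x$ lying in the connected piece $D$ at which $Tx \perp_B Ax$, and the natural device for this is Theorem \ref{componentcontinuity} applied with $\mathcal{D}_f = D$: since $D$ is connected and $D \subseteq \mathcal{M}_T$, it suffices to verify condition $(i)$ of that theorem, namely the existence of $x_1, x_2 \in D$ with $Ax_1 \in Tx_1^+$ and $Ax_2 \in Tx_2^-$.

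Accordingly, the first step is to apply Theorem \ref{realcontinuity} to $T \perp_B A$. This yields $u_1, u_2 \in \mathcal{M}_T = D \cup (-D)$ with $Au_1 \in Tu_1^+$ and $Au_2 \in Tu_2^-$. The only difficulty is that these witnesses need not both lie in $D$: a priori one or both could lie in $-D$, in which case condition $(i)$ of Theorem \ref{componentcontinuity} for the set $D$ is not immediately available.

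The key observation that removes this difficulty is the antipodal symmetry of the situation. Because $T$ and $A$ are linear, for any $u \in S_\mathbb{X}$ one has $T(-u) = -Tu$ and $A(-u) = -Au$, hence $\|T(-u) + \lambda A(-u)\| = \|Tu + \lambda Au\|$ for every $\lambda \in \mathbb{R}$. Comparing with the definitions of $z^+$ and $z^-$, this shows that $Au \in Tu^+$ holds if and only if $A(-u) \in T(-u)^+$, and similarly with $-$ in place of $+$. Thus any witness $u_i \in -D$ may be replaced by $-u_i \in D$ without altering its sign condition, and one obtains $x_1, x_2 \in D$ with $Ax_1 \in Tx_1^+$ and $Ax_2 \in Tx_2^-$.

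With condition $(i)$ of Theorem \ref{componentcontinuity} verified for $\mathcal{D}_f = D$, the stated equivalence immediately gives condition $(ii)$: there exists $x \in D$ with $Tx \perp_B Ax$, which is the desired conclusion. I expect the only genuinely nontrivial point to be this transfer step, that is, justifying that the orthogonality witnesses supplied by Theorem \ref{realcontinuity} on the possibly disconnected set $D \cup (-D)$ can be funneled into the single connected component $D$; once the antipodal invariance of $z^+$ and $z^-$ is recorded, the connectedness machinery packaged in Theorem \ref{componentcontinuity} does the rest.
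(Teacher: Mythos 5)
Your proposal is correct and follows essentially the same route as the paper: restrict to $S_\mathbb{X}$, obtain the two sign witnesses from Theorem \ref{realcontinuity}, move them into $D$ using linearity, and conclude via Theorem \ref{componentcontinuity}. Your explicit antipodal-invariance observation ($Au \in Tu^{\pm}$ if and only if $A(-u) \in T(-u)^{\pm}$) is exactly the content of the paper's terser claim that, by linearity of $T$, a witness $y_0 \in D$ with $Ay_0 \in Ty_0^-$ must exist, so you have merely spelled out the same step more carefully.
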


\begin{proof}
Considering the restriction on $ S_{\mathbb{X}}, $ it follows that  $ T ,A \in  \mathcal{C}(S_\mathbb{X}, \mathbb{X}) $ with $ T \perp_B A $. In view of Theorem \ref{realcontinuity}, there exists $ x, y \in \mathcal{M}_T $ such that $ Ax\in Tx^+ $ and $ Ay\in Ty^- $. Without loss of generality, let $ x\in D $. We claim that there exists $ y_0\in D $ such that $ Ay_0\in Ty_0^- $. Indeed, otherwise the linearity of $ T $ would imply $ Az\in Tz^+ $ for all $ z\in \mathcal{M}_T$, which contradicts Theorem \ref{realcontinuity}. Now, it follows from Theorem \ref{componentcontinuity} that there exists $ x\in \mathcal{M}_T$ such that $ Tx\perp_B Ax $. As before, note that $\mathcal{M}_T=M_T.$ This completes the proof of the corollary.
\end{proof}

We would like to characterize Birkhoff-James orthogonality of real bilinear forms on $ \mathbb{H}_1\times \mathbb{H}_2 $. To obtain the desired characterization, let us first present some useful facts about bilinear forms on $ \mathbb{H}_1\times \mathbb{H}_2 $. We omit the proofs as one can verify them directly.

\begin{prop}\label{preliminary}
Let $ \mathbb{H}_1,\mathbb{H}_2 $ be  finite-dimensional  Hilbert spaces and let $ \mathcal{T}_A, \mathcal{T}_B \in \mathcal{B}(\mathbb{H}_1\times \mathbb{H}_2, \mathbb{R}) $ be non-zero. Then the following hold true:\\
$ (i) $ $ \mathcal{T}_A + \lambda \mathcal{T}_B = \mathcal{T}_{A+\lambda B} $ for all $ \lambda \in \mathbb{R} $.\\
$ (ii) $ $ \|\mathcal{T}_A\| = \| A\| $.\\ 
$ (iii) $ $ M_{\mathcal{T}_A} = \{ (\pm \frac{Ay}{\|A\|}, y) : y \in M_A \}. $\\
\end{prop}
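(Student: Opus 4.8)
The plan is to derive all three assertions directly from the defining identity (\ref{presentation}) together with the Cauchy--Schwarz inequality in $\mathbb{H}_1$; the whole argument is elementary, so the only mild subtlety lies in the equality analysis needed for $(iii)$. For $(i)$, I would simply unravel the definitions: for any $(x,y)\in \mathbb{H}_1\times \mathbb{H}_2$, bilinearity of the inner product gives $(\mathcal{T}_A+\lambda \mathcal{T}_B)(x,y)=\langle x, Ay\rangle+\lambda\langle x, By\rangle=\langle x,(A+\lambda B)y\rangle=\mathcal{T}_{A+\lambda B}(x,y)$, so the two bilinear forms agree pointwise. This step is purely formal and presents no obstacle.

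For $(ii)$, I would evaluate the supremum defining $\|\mathcal{T}_A\|$ in two stages. Fixing $y\in S_{\mathbb{H}_2}$ and taking the supremum over $x\in S_{\mathbb{H}_1}$ first, Cauchy--Schwarz yields $\sup_{x\in S_{\mathbb{H}_1}}|\langle x, Ay\rangle|=\|Ay\|$, the value being attained at $x=Ay/\|Ay\|$ whenever $Ay\neq 0$ (and trivially equal to $0$ otherwise). Taking the supremum of $\|Ay\|$ over $y\in S_{\mathbb{H}_2}$ then returns the operator norm, giving $\|\mathcal{T}_A\|=\|A\|$.

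For $(iii)$, which is the only step requiring a little care, I would analyze the equality case in the chain $|\langle x, Ay\rangle|\le \|x\|\,\|Ay\|=\|Ay\|\le \|A\|$, valid for every $(x,y)\in S_{\mathbb{H}_1}\times S_{\mathbb{H}_2}$. A pair $(x,y)$ lies in $M_{\mathcal{T}_A}$ precisely when both inequalities become equalities. The second forces $\|Ay\|=\|A\|$, that is, $y\in M_A$; since $\mathcal{T}_A\neq 0$ we have $A\neq 0$, hence $\|A\|>0$ and $Ay\neq 0$. The equality condition in Cauchy--Schwarz then forces $x$ to be a unit scalar multiple of $Ay$, namely $x=\pm Ay/\|Ay\|=\pm Ay/\|A\|$. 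Conversely, any pair of this form evidently attains the norm. I expect the sharpness of the Cauchy--Schwarz step (its equality case) to be the only point deserving explicit mention, though it is entirely standard, and it yields exactly the claimed description of $M_{\mathcal{T}_A}$.
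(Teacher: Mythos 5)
Your proposal is correct and follows essentially the same route as the paper's own (omitted, but standard) verification: direct unravelling of definitions for $(i)$, the Cauchy--Schwarz bound together with evaluation at $\left(\frac{Ay}{\|A\|}, y\right)$ for $y \in M_A$ to get $(ii)$, and the equality case of Cauchy--Schwarz, after first forcing $y \in M_A$, to get $(iii)$. Your explicit attention to the equality analysis and to the nondegeneracy $A \neq 0$ is exactly the right (and only) point of care.
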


Now, the promised characterization:

\begin{theorem}\label{Bilinearform}
Let $ \mathbb{H}_1,\mathbb{H}_2 $ be finite-dimensional Hilbert spaces and let $ \mathcal{T}_A, \mathcal{T}_B \in \mathcal{B}(\mathbb{H}_1\times \mathbb{H}_2, \mathbb{R}) $ be non-zero. Then $ \mathcal{T}_A\perp_B \mathcal{T}_B $ if and only if there exists $ (x_0,y_0) \in M_{\mathcal{T}_A} $ such that $ \mathcal{T}_B(x_0,y_0) = 0 $.
\end{theorem}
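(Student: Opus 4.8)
The plan is to treat sufficiency by a direct norm estimate and to devote the real work to necessity, which I would reduce to a one-variable intermediate value argument. For sufficiency, assuming $(x_0,y_0)\in M_{\mathcal{T}_A}$ with $\mathcal{T}_B(x_0,y_0)=0$, I would simply note that for every $\lambda\in\mathbb{R}$, Proposition \ref{preliminary}(i) gives
\[
\|\mathcal{T}_A+\lambda\mathcal{T}_B\|=\|\mathcal{T}_{A+\lambda B}\|\ge|\mathcal{T}_A(x_0,y_0)+\lambda\mathcal{T}_B(x_0,y_0)|=|\mathcal{T}_A(x_0,y_0)|=\|\mathcal{T}_A\|,
\]
whence $\mathcal{T}_A\perp_B\mathcal{T}_B$.

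For necessity, I would view $\mathcal{T}_A,\mathcal{T}_B$ as elements of $\mathcal{C}(S_{\mathbb{H}_1}\times S_{\mathbb{H}_2},\mathbb{R})$ and apply Theorem \ref{realcontinuity} with $\mathcal{U}=S_{\mathbb{H}_1}\times S_{\mathbb{H}_2}$ and $\mathbb{X}=\mathbb{R}$, obtaining $p_1,p_2\in M_{\mathcal{T}_A}$ with $\mathcal{T}_B(p_1)\in\mathcal{T}_A(p_1)^+$ and $\mathcal{T}_B(p_2)\in\mathcal{T}_A(p_2)^-$. The first step is to unwind these relations in the scalar case: for $r\neq 0$ one has $s\in r^+$ exactly when $rs\ge 0$ and $s\in r^-$ exactly when $rs\le 0$. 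Since every point of $M_{\mathcal{T}_A}$ makes $\mathcal{T}_A$ equal to $\pm\|A\|\neq 0$, this turns the conclusion of Theorem \ref{realcontinuity} into the sign conditions $\mathcal{T}_A(p_1)\mathcal{T}_B(p_1)\ge 0$ and $\mathcal{T}_A(p_2)\mathcal{T}_B(p_2)\le 0$.

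Next I would collapse the two coordinates into a single scalar function on $M_A$. Writing an arbitrary point of $M_{\mathcal{T}_A}$ as $(\varepsilon Ay/\|A\|,\,y)$ with $\varepsilon=\pm 1$ and $y\in M_A$ by Proposition \ref{preliminary}(iii), a short computation shows that $\mathcal{T}_A(\varepsilon Ay/\|A\|,y)\,\mathcal{T}_B(\varepsilon Ay/\|A\|,y)=\langle Ay,By\rangle$, independently of $\varepsilon$. Setting $h(y)=\langle Ay,By\rangle$ on $M_A$, the sign conditions become $h(y_1)\ge 0$ and $h(y_2)\le 0$, where $y_1,y_2$ are the $\mathbb{H}_2$-coordinates of $p_1,p_2$. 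A zero $y_0$ of $h$ then finishes the proof, since $(Ay_0/\|A\|,y_0)\in M_{\mathcal{T}_A}$ and $\mathcal{T}_B(Ay_0/\|A\|,y_0)=h(y_0)/\|A\|=0$.

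The one point requiring care, and the reason Theorem \ref{connectedcontinuity} cannot be quoted verbatim, is that $M_A$ need not be connected: it is the unit sphere of the top eigenspace $E$ of $A^TA$, hence a pair of antipodal points when $\dim E=1$. I expect to dispose of this using the evenness $h(-y)=h(y)$. When $\dim E\ge 2$ the sphere $M_A$ is connected, so $h(M_A)$ is an interval meeting both $[0,\infty)$ and $(-\infty,0]$ and therefore containing $0$; when $\dim E=1$ the function $h$ is constant on $M_A=\{\pm v\}$, so $h(y_1)\ge 0\ge h(y_2)$ forces $h(v)=0$. In both cases $h(M_A)$ is a connected subset of $\mathbb{R}$ straddling the origin, which produces the desired $y_0$. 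The evenness of $h$, compensating for the possible disconnectedness of $M_A$, is the crux of the argument.
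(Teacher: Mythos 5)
Your proof is correct, and it follows the same skeleton as the paper's — Theorem \ref{realcontinuity} supplies two points of $M_{\mathcal{T}_A}$ with opposite sign conditions, and connectedness of the norm-attainment set then produces a zero — but your implementation differs at the two places where the paper leans on auxiliary machinery, and in both cases yours is more self-contained. Where the paper splits $M_{\mathcal{T}_A}$ into the sheets $W_1=\{(Ay/\|A\|,y):y\in M_A\}$ and $W_2=\{(-Ay/\|A\|,y):y\in M_A\}$, moves a negative-sign point onto $W_1$ by bilinearity, and then invokes Theorem \ref{componentcontinuity}, you scalarize: your identity $\mathcal{T}_A(\varepsilon Ay/\|A\|,y)\,\mathcal{T}_B(\varepsilon Ay/\|A\|,y)=\langle Ay,By\rangle$ is exactly the paper's sheet-transfer mechanism (both factors flip sign in the first argument, so the product does not), stated once and for all as evenness of $h(y)=\langle Ay,By\rangle$, after which the intermediate value theorem on $h(M_A)$ does the work of Theorem \ref{componentcontinuity}. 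Likewise, where the paper cites \cite[Theorem 2.2]{SP} for connectedness of $M_A$ and treats $M_A=\{\pm y\}$ as a separate case via Theorem \ref{componentcontinuity}, you identify $M_A$ directly as the unit sphere of the top eigenspace of $A^TA$ and let evenness of $h$ absorb the one-dimensional case — the same case division, but resolved by one uniform observation rather than two different arguments. A further small merit: you read Theorem \ref{realcontinuity} correctly as yielding the weak sign conditions $h(y_1)\ge 0\ge h(y_2)$; the paper asserts strict inequalities at this step, which is harmless only with the unstated caveat that if either product vanishes one is done immediately, and your $h$-formulation absorbs that caveat without comment.
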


\begin{proof}
We only prove the necessary part of the theorem, since the sufficient part  follows trivially. We observe that $ M_{\mathcal{T}_A} = W_1 \cup W_2 $, where
\begin{align*}
W_1 & = \left\{ \left(\frac{Ay}{\|A\|}, y\right) : y \in M_A \right \}~~and~~W_2  = \left\{ \left(-\frac{Ay}{\|A\|}, y\right) : y \in M_A \right\}.
\end{align*} 
If $ M_A = \{ \pm y \} $ for some $ y \in S_{\mathbb{H}_2} $, then the proof follows directly from Theorem \ref{componentcontinuity}, using the bilinearity of $ \mathcal{T}_A $. Otherwise, define $ f_1,f_2: \mathbb{H}_2 \rightarrow  \mathbb{H}_1\times \mathbb{H}_2 $ by $ f_1(y) = \left(\frac{Ay}{\|A\|}, y\right) $ and $ f_2(y) = \left(-\frac{Ay}{\|A\|}, y\right) $, respectively.  It follows from Theorem $2.2$ of \cite{SP} that $M_A$ is connected. Consequently, $ W_1 $ and $ W_2 $ are also connected, as they are continuous images of $ M_A $. Now, $ \mathcal{T}_A , \mathcal{T}_B \in \mathcal{C}(S_\mathbb{X}\times S_\mathbb{Y}, \mathbb{R}) $ with $ \mathcal{T}_A \perp_B \mathcal{T}_B $. Therefore in view of Theorem \ref{realcontinuity}, there exist $ (x_1,y_1),(x_2,y_2) \in M_{\mathcal{T}_A} $ such that $ \mathcal{T}_B(x_1,y_1)\in \mathcal{T}_A(x_1,y_1)^+ $ and $ \mathcal{T}_B(x_2,y_2)\in \mathcal{T}_A(x_2,y_2)^- $. Since $ \mathcal{T}_A, \mathcal{T}_B $ are real valued, we must have  $\mathcal{T}_A(x_1,y_1) \mathcal{T}_B(x_1,y_1) > 0 $ and $  \mathcal{T}_A(x_2,y_2)\mathcal{T}_B(x_2,y_2) < 0. $ Without loss of generality, suppose $ (x_1,y_1)\in W_1 $. Then there must be  $ (x_2,y_2)\in W_1 $ such that $ \mathcal{T}_A(x_2,y_2)\mathcal{T}_B(x_2,y_2) < 0 $. Indeed, otherwise the bilinearity of $ \mathcal{T}_A $ would imply $ \mathcal{T}_A(x,y)  \mathcal{T}_B(x,y) > 0 $ for all $ (x,y)\in M_{\mathcal{T}_A} $, which contradicts Theorem \ref{realcontinuity}. Now, the proof follows from Theorem \ref{componentcontinuity}.
\end{proof}

As a natural outcome of our present study, we may obtain a completely new and elementary proof of the classical Bhatia-\v{S}emrl Theorem in the real case, as an application of Theorem \ref{Bilinearform}. Note that in the following theorem, $A$ and $B$ are $n\times n$ matrices with real entries and $\mathbb{H}$ is an $n$-dimensional real Hilbert space for some $n>1$.

\begin{theorem}\label{BhatiaSemrl} A matrix $A$ is orthogonal to $B$ if and only if there exists a unit vector $ y_0 \in \mathbb{H} $ such that $||A||=||Ay_0||$ and $ \langle Ay_0, By_0 \rangle = 0 $.
\end{theorem}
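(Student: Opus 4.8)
The plan is to transport the problem entirely into the language of bilinear forms, where Theorem \ref{Bilinearform} already supplies a clean criterion, and then decode that criterion back into a statement about the vectors $Ay_0$ and $By_0$. Since here $\mathbb{H}_1 = \mathbb{H}_2 = \mathbb{H}$, every $n \times n$ matrix $A$ is simultaneously a linear operator on $\mathbb{H}$ and the generator of the bilinear form $\mathcal{T}_A$, so the whole argument reduces to showing that the two notions of Birkhoff-James orthogonality here---operator orthogonality and bilinear-form orthogonality---coincide, after which Theorem \ref{Bilinearform} does the rest.

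First I would establish that $A \perp_B B$ as operators if and only if $\mathcal{T}_A \perp_B \mathcal{T}_B$ as bilinear forms. This is immediate from Proposition \ref{preliminary}: parts $(i)$ and $(ii)$ give
\begin{align*}
\|\mathcal{T}_A + \lambda \mathcal{T}_B\| = \|\mathcal{T}_{A + \lambda B}\| = \|A + \lambda B\|
\end{align*}
for every $\lambda \in \mathbb{R}$, while part $(ii)$ also yields $\|\mathcal{T}_A\| = \|A\|$. Hence the inequality $\|A + \lambda B\| \geq \|A\|$ holding for all $\lambda$ is \emph{exactly} the inequality $\|\mathcal{T}_A + \lambda \mathcal{T}_B\| \geq \|\mathcal{T}_A\|$ holding for all $\lambda$, which is the desired equivalence.

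Next, assuming $A \perp_B B$ (equivalently $\mathcal{T}_A \perp_B \mathcal{T}_B$), I would invoke Theorem \ref{Bilinearform} to obtain a pair $(x_0, y_0) \in M_{\mathcal{T}_A}$ with $\mathcal{T}_B(x_0, y_0) = 0$. By Proposition \ref{preliminary}$(iii)$, membership in $M_{\mathcal{T}_A}$ forces $y_0 \in M_A$---so $y_0$ is a unit vector with $\|Ay_0\| = \|A\|$---and $x_0 = \pm \frac{Ay_0}{\|A\|}$. Substituting into the definition $\mathcal{T}_B(x_0, y_0) = \langle x_0, By_0 \rangle$ yields $\pm \frac{1}{\|A\|}\langle Ay_0, By_0 \rangle = 0$, and since the scalar $\pm \frac{1}{\|A\|}$ is nonzero this is equivalent to $\langle Ay_0, By_0 \rangle = 0$. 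This produces exactly the unit vector $y_0$ demanded by the statement. For sufficiency I would simply reverse these steps: the existence of such a $y_0$ gives $\mathcal{T}_B(x_0, y_0) = 0$ at a point $(x_0, y_0) = (\pm \frac{Ay_0}{\|A\|}, y_0) \in M_{\mathcal{T}_A}$, whence $\mathcal{T}_A \perp_B \mathcal{T}_B$ by Theorem \ref{Bilinearform}, and thus $A \perp_B B$ by the equivalence above.

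There is no serious analytic obstacle at this stage; the entire difficulty has been absorbed into Theorem \ref{Bilinearform}, whose proof in turn rested on the connectedness of $M_A$ (via Theorem $2.2$ of \cite{SP}) and on the continuous-function characterization of Theorem \ref{realcontinuity}. The only points requiring minor care are bookkeeping: confirming that the sign ambiguity in $x_0 = \pm \frac{Ay_0}{\|A\|}$ is harmless since it contributes only a nonzero scalar factor, and noting that $y_0 \in M_A$ automatically encodes both the norm-attainment condition $\|A\| = \|Ay_0\|$ and the unit-vector condition.
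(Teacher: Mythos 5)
Your proposal is correct and follows essentially the same route as the paper: pass from matrices to the bilinear forms $\mathcal{T}_A, \mathcal{T}_B$ via Proposition \ref{preliminary}, apply Theorem \ref{Bilinearform} to get $(x_0,y_0) \in M_{\mathcal{T}_A}$ with $\mathcal{T}_B(x_0,y_0)=0$, and decode $x_0 = \pm\frac{Ay_0}{\|A\|}$, $y_0 \in M_A$ into $\langle Ay_0, By_0\rangle = 0$. The only cosmetic differences are that you spell out the sufficiency direction and the two-way equivalence of the orthogonality notions, which the paper dismisses as trivial (and you should, like the paper, note the harmless reduction to $A$ non-zero, since Theorem \ref{Bilinearform} assumes the forms are non-zero and $\|A\|$ appears in a denominator).
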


\begin{proof}
Without loss of generality, let $ A $ be non-zero. We only prove the necessary part of the theorem as the proof of the sufficient part  follows trivially. Consider the bilinear forms $ \mathcal{T}_A, \mathcal{T}_B\in \mathcal{B}(\mathbb{H}\times \mathbb{H}, \mathbb{R}) $. Now, for any $ \lambda \in \mathbb{R} $, we have
\begin{align*}
\|\mathcal{T}_A + \lambda \mathcal{T}_B \| = \| A+ \lambda B \| \geq \| A\| = \| \mathcal{T}_A \|.
\end{align*}
Therefore, $ \mathcal{T}_A \perp_B \mathcal{T}_B $. In light of  Theorem \ref{Bilinearform},  there exists $ (x_0,y_0) \in M_{\mathcal{T}_A} $ such that $ \mathcal{T}_B(x_0,y_0) = 0 $. Also, since $ (x_0,y_0) \in M_{\mathcal{T}_A} $, we must  have $ x_0 = \pm\frac{Ay_0}{\|A\|} $ for some $ y_0\in M_A $. This implies that $ \mathcal{T}_B(x_0,y_0) = \pm \frac{1}{\|A\|} \langle Ay_0, By_0 \rangle = 0$. This establishes the theorem.
\end{proof}

Let us end this article with the following closing remark:

\begin{remark}
The study of Birkhoff-James orthogonality of linear operators between Hilbert spaces and Banach spaces have been conducted in detail in \cite{BG, BS, S, SP}. The main purpose of this article is to illustrate that it is possible to extend these results to the topological setting. Moreover, as a natural outcome of such a study, Birkhoff-James orthogonality of real bilinear forms can be characterized completely, which in turn allows us to obtain a new and elementary proof of the Bhatia-\v{S}emrl Theorem. Although a number of proofs of the said theorem are available in the literature \cite{ BG, BS, K, TA}, we believe that our proof is less complicated and differs from each of those in motivation.
\end{remark}


\begin{thebibliography}{99}

\bibitem{B} G. Birkhoff, \textit{Orthogonality in linear metric spaces}, Duke Math. J. \textbf{1} (1935) 169-172.

\bibitem{BG} T. Bhattacharyya, P. Grover, \textit{Characterization of Birkhoff-James orthogonality}, J. Math. Anal. Appl. \textbf{407} (2013) 350-358.

\bibitem{BS} R. Bhatia, P. $ \breve{S}$emrl, \textit{Orthogonality of matrices and distance problems}, Linear Algebra Appl. \textbf{287 }(1999)  77-85.



\bibitem{J} R. C. James, \textit{Inner product in normed linear spaces}, Bull. Amer. Math. Soc. \textbf{53} (1947) 559-566.

\bibitem{Ja} R. C. James, \textit{Orthogonality and linear functionals in normed linear spaces}, Trans. Amer. Math. Soc. \textbf{61} (1947) 265-292.

\bibitem{K} D. J. Ke$ \breve{c} $ki$ \grave{c} $, \textit{Gateaux derivative of B(H) norm},  Proc. Amer. Math. Soc. \textbf{133} (2005) 2061-2067.




\bibitem{S} D. Sain, \textit{Birkhoff-James orthogonality of linear operators on finite dimensional Banach spaces}, J. Math. Anal. Appl. \textbf{447} (2017)  860-866.


\bibitem{SP} D. Sain, K. Paul, \textit{Operator norm attainment and inner product spaces}, Linear Algebra Appl. \textbf{439} (2013) 2448-2452.

\bibitem{TA} A. Turn$ \breve{s} $ek, \textit{A remark on orthogonality and symmetry of operators in B(H)}, Linear Algebra Appl. \textbf{535} (2017) 141-150.


\end{thebibliography}
\end{document}